\documentclass[11pt]{article}

\usepackage[T1]{fontenc}
\usepackage[margin=1in]{geometry}
\usepackage{amsmath,amssymb,amsfonts,amsthm,mathtools,bm}
\usepackage{mathpazo}
\usepackage[scaled]{helvet}
\usepackage{courier}
\normalfont
\usepackage{microtype}
\usepackage[dvipsnames]{xcolor}
\newif\ifshowchanges
\showchangestrue

\usepackage{graphicx,caption}
\usepackage[compress]{natbib}
\usepackage[shortlabels]{enumitem}
\usepackage{tikz,pgfplots}
\usetikzlibrary{arrows.meta}
\pgfplotsset{compat=1.18}
\usepackage{url}
\usepackage{hyperref}
\hypersetup{
  colorlinks=true,
  citecolor=MidnightBlue,
  linkcolor=red!80!black,
  urlcolor=MidnightBlue,
  pdftitle={Nonminimizing Attractors of Stochastic Subgradient Methods on Semialgebraic Functions},
  pdfauthor={Jiajin Li}
}
\allowdisplaybreaks[4]
\theoremstyle{plain}
\newtheorem{theorem}{Theorem}[section]
\newtheorem{proposition}[theorem]{Proposition}

\newtheorem{corollary}[theorem]{Corollary}
\theoremstyle{definition}

\numberwithin{equation}{section}

\newcommand{\R}{\mathbb R}

\newcommand{\Prob}{\mathbb P}

\title{Nonminimizing Attractors of Stochastic Subgradient Methods\\
on Semialgebraic Functions}
\author{%
Jiajin Li\\[0.75em]
Sauder School of Business,\\
University of British Columbia\\[0.5em]
\small\texttt{jiajin.li@sauder.ubc.ca}
}
\date{\today}

\begin{document}
\maketitle

\begin{abstract}
We disprove a conjecture of \citet*{DDJ2023} that
subdifferential regularity can be removed from the generic
guarantee of convergence to local minimizers for stochastic
subgradient methods. We construct a globally Lipschitz,
coercive, semialgebraic function in two dimensions for which the method
converges to a nonminimizing Clarke critical point. For suitable
power-law stepsizes, this convergence holds for open sets of initial
points and linear tilts of the objective, uniformly over all
perturbation sequences within a prescribed bound.
\end{abstract}



\section{Introduction}\label{sec:introduction}
For generic smooth optimization problems, bounded stochastic
gradient iterates converge almost surely to a local minimizer
under standard stepsize and noise conditions.
Here genericity refers to almost every linear tilt of the
objective, for which all critical points are nondegenerate.
Every nonminimizing critical point then has a negative Hessian
eigenvalue, creating an unstable direction for the gradient
dynamics.
The classical theory of \citet{Pemantle1990} rules out
convergence to such unstable equilibria.
Later, \citet{LPPSJR2019} used stable manifold theory to show
that deterministic gradient descent on objectives with
Lipschitz gradients avoids strict saddles for almost every
initialization when the fixed stepsize is sufficiently small.

The stochastic subgradient method extends stochastic gradient
descent to nonsmooth objectives, where Clarke criticality
provides a natural notion of stationarity \citep{Clarke1990}.
For general Lipschitz functions, even this basic guarantee
can fail: \citet{DD2020} construct bounded deterministic
subgradient iterates whose accumulation points are all
noncritical, despite nonsummable, square-summable stepsizes.
Moreover, \citet{RZ2020} shows that bounded subgradient iterates
can fail to converge even for Whitney stratifiable functions
satisfying the Kurdyka--{\L}ojasiewicz inequality.
For locally Lipschitz functions definable in an o-minimal
structure, including all semialgebraic functions,
\citet{DDKL2020} establish a positive result:
under their stepsize and noise assumptions, every accumulation
point of bounded stochastic subgradient iterates is almost
surely Clarke critical.
This conclusion concerns stationarity, leaving open whether
noise also prevents convergence to nonminimizing critical
points. In particular, does the generic convergence guarantee
from smooth optimization extend to this nonsmooth setting?

For stochastic subgradient methods, \citet{BHS2024}
establish avoidance of active strict saddles and a generic
guarantee of convergence of bounded trajectories to local minimizers
for definable weakly convex functions.
\citet{DDJ2023} establish this guarantee for the broader
class of definable, locally Lipschitz, subdifferentially
regular functions.
Specifically, for almost every linear tilt
$f_{\bm{v}}(\bm{x}):=f(\bm{x})-\langle \bm{v},\bm{x}\rangle$, bounded stochastic
subgradient iterates converge almost surely to a local
minimizer under their stepsize and noise assumptions.
Their analysis relates the nonsmooth iteration to an
approximate Riemannian gradient method on an active manifold,
allowing them to apply classical arguments for avoiding
unstable critical points.
They conjecture that the same conclusion holds without
subdifferential regularity.

We disprove this conjecture by constructing a globally Lipschitz,
coercive, semialgebraic function in two dimensions whose origin
is Clarke critical but neither limiting critical nor a local minimizer.
The origin admits directions of linear decrease, yet attracts
stochastic subgradient iterates from an open set of initial points.
This attraction persists over an open set of linear tilts,
uniformly over all perturbation sequences within a prescribed bound.
Every iterate is a smooth point, and the gradient norms remain
uniformly bounded away from zero.

Inspired by the BFGS example of \citet{LZ2015}, we construct
a positively homogeneous planar function in polar coordinates.
The function is $C^\infty$ everywhere except
at the origin.
After rescaling by the stepsize, the iterates remain near a
contracting four-cycle; the diminishing stepsize then drives
the original iterates to the origin.
Along these trajectories, the length of the gradient step
remains comparable to the distance to the origin.
Thus, even as the stepsizes vanish, the gradient steps do not
become small relative to the distance to the singularity.
The contracting cycle makes this discrete mechanism robust
to perturbations, allowing attraction despite the directions
of linear decrease.

\vspace{-2mm}
\paragraph{Concurrent work.}
In independent and concurrent work, \citet{LiHu2026}
obtain a related counterexample under bounded noise.
Both constructions trap the normalized iterates near a
periodic orbit, so that diminishing stepsizes drive the
original iterates to a nonminimizing point.
Our approach formulates a general stability criterion for
this mechanism and gives an explicit construction based on
a positively homogeneous function that is smooth away
from the origin.  



\vspace{-2mm}
\section{Preliminaries and main results}\label{sec:main-results}
\subsection{Preliminaries}\label{sec:preliminaries}
Vectors are written in bold. We use the Euclidean norm and its induced
operator norm.
The open and closed balls of radius $r$ centered at $\bm{x}$ are
$\mathbb B_r(\bm{x})$ and $\overline{\mathbb B}_r(\bm{x})$, respectively.
The symbols $\mathcal I$ and $D$ denote the identity map and the Jacobian.

For a locally Lipschitz function $h\colon\R^d\to\R$, define
\[
 \begin{aligned}
 \widehat\partial h(\bm{x})
 &:=\left\{\bm{w}:\liminf_{\substack{\bm{y}\to \bm{x}\\\bm{y}\ne \bm{x}}}
 \frac{h(\bm{y})-h(\bm{x})-\langle \bm{w},\bm{y}-\bm{x}\rangle}{\|\bm{y}-\bm{x}\|}\ge0\right\},\\
 \partial h(\bm{x})
 &:=\left\{\bm{w}:\exists\,\bm{x}_n\to \bm{x},\ \bm{w}_n\to \bm{w},
                    \ \bm{w}_n\in\widehat\partial h(\bm{x}_n)\right\},\\
 \partial_C h(\bm{x})&:=\operatorname{conv}\partial h(\bm{x}).
 \end{aligned}
\]
These are the Fr\'echet, limiting (Mordukhovich), and Clarke
subdifferentials; see~\citep{Clarke1990,Mordukhovich2006,RW1998,CP2021}.
A point is critical in the corresponding sense if the
subdifferential contains zero.
All three sets equal $\{\nabla h(\bm{x})\}$ when $h$ is $C^1$
at the point $\bm{x}$. We call $h$ subdifferentially regular at $\bm{x}$ if
$\widehat\partial h(\bm{x})=\partial_C h(\bm{x})$.
Every local minimizer satisfies $\bm{0}\in\widehat\partial h(\bm{x})$.

A function is semialgebraic if its graph is a finite union
of sets defined by finitely many polynomial equalities and
inequalities. It is coercive if $h(\bm{x})\to+\infty$ as
$\|\bm{x}\|\to\infty$, and positively homogeneous of degree one
if $h(t\bm{x})=t h(\bm{x})$ for every $t>0$.
For such a homogeneous function that is $C^1$ away from zero,
$\nabla h(t\bm{x})=\nabla h(\bm{x})$ whenever $\bm{x}\ne\bm{0}$ and $t>0$.

\subsection{Main results}

We now state the main result and its consequence for the
regularity conjecture.
For a globally Lipschitz function $f\colon\R^d\to\R$ and a linear
tilt $f_{\bm{v}}(\bm{x})=f(\bm{x})-\langle \bm{v},\bm{x}\rangle$, fix a Borel measurable
selection $\bm{g}$ of $\partial_C f$.
Let $\alpha_k>0$ be the stepsize and $\bm{\nu}_k\in\R^d$ a
perturbation vector at iteration $k$. We consider
\begin{equation}\label{eq:basic-update}
 \bm{x}_{k+1}=\bm{x}_k-\alpha_k\bigl(\bm{g}(\bm{x}_k)-\bm{v}\bigr)+\alpha_k\bm{\nu}_k,
 \qquad \bm{g}(\bm{x})\in\partial_C f(\bm{x}).
\end{equation}
A point $\overline{\bm{x}}$ is Clarke critical for $f_{\bm{v}}$ if
$\bm{v}\in\partial_C f(\overline{\bm{x}})$.

\begin{theorem}
\label{thm:main}
For every $r>0$ and $\gamma\in(\tfrac{1}{2},1]$, there exist a globally
Lipschitz, coercive, semialgebraic function $f:\R^2\to\R$,
constants $K,c,C,\kappa>0$, and nonempty open sets
$U,V\subset\R^2$ such that the following statements hold.

\begin{enumerate}
\item[(i)] 
With $\alpha_k=(K+k)^{-\gamma}$, for every $\bm{v}\in V$,
$\bm{x}_0\in U$, and perturbation sequence satisfying $\|\bm{\nu}_k\|\le r$,
the iterates generated by~\eqref{eq:basic-update} satisfy
\[
    c\alpha_k\le \|\bm{x}_k\|\le C\alpha_k\to0,
    \qquad
    \|\nabla f_{\bm{v}}(\bm{x}_k)\|\ge\kappa,
\]
and $f$ is $C^\infty$ in a neighborhood of every $\bm{x}_k$.

\item[(ii)]  
For every $\bm{v}\in V$, the tilted function $f_{\bm{v}}$ is coercive and
\[
    \bm{0}\in{\partial_C f_{\bm{v}}(\bm{0})},
    \qquad
    \operatorname{dist}\bigl(\bm{0},\partial f_{\bm{v}}(\bm{0})\bigr)
    \ge\kappa.
\]
\end{enumerate}
The constants $c,C,\kappa$ are uniform over all
$\bm{v}\in V$, $\bm{x}_0\in U$, and admissible perturbation sequences.
\end{theorem}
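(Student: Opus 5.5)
The construction takes $f$ positively homogeneous of degree one near the origin, $f(\bm{x})=\|\bm{x}\|\,\phi(\theta)$ in polar coordinates, with $\phi$ a smooth $2\pi$-periodic semialgebraic-compatible profile. Concretely, $f$ will be a rational function of $x_1,x_2$ and $\|\bm{x}\|$, so that it is $C^\infty$ away from $\bm{0}$. Far from the origin we glue it to $\|\bm{x}\|$ times a large constant, which gives coercivity and global Lipschitzness. Since $\nabla f(t\bm{x})=\nabla f(\bm{x})$, the gradient is a function $G(\theta)$ of the angle alone, and $\partial f(\bm{0})=\{G(\theta)\}$ together with $\widehat\partial f(\bm{0})$ contributions. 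For homogeneous $f$, in fact $\partial f(\bm{0})$ is the closed curve $\{G(\theta):\theta\in[0,2\pi)\}$ together with $\widehat\partial f(\bm{0})=\{\bm{w}:\langle \bm{w},\bm{u}\rangle\le\phi(\bm{u})\ \forall \bm{u}\}$. We pick $\phi$ so that the gradient curve $G$ winds around a point $\bm{v}_0$ (giving $\bm{v}_0\in\partial_C f(\bm{0})$ by convexity). We also require that $\bm{v}_0$ is at positive distance from the curve, and that $\phi(\bm{u})<\langle \bm{v}_0,\bm{u}\rangle$ for some direction $\bm{u}$, which makes $\widehat\partial f(\bm{0})$ avoid a neighborhood of $\bm{v}_0$. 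Part (ii) then holds for all $\bm{v}$ in a small ball $V$ around $\bm{v}_0$, with $\kappa$ half the minimum distance. Coercivity of $f_{\bm{v}}$ follows from the large outer slope.

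For part (i), rescale: set $\bm{y}_k=\bm{x}_k/\alpha_k$. Using homogeneity of $\nabla f$,
\[
\bm{y}_{k+1}=\tfrac{\alpha_k}{\alpha_{k+1}}\bigl(\bm{y}_k-G(\bm{y}_k)+\bm{v}+\bm{\nu}_k\bigr),
\]
and $\alpha_k/\alpha_{k+1}=1+O(1/(K+k))$. This is a small nonautonomous perturbation of the fixed map $T_{\bm{v}}(\bm{y})=\bm{y}-G(\bm{y})+\bm{v}$, where $G$ is viewed as a function of $\bm{y}\neq\bm{0}$. The plan is to design $\phi$ so that $T_{\bm{v}_0}$ has an attracting periodic orbit of period four, $\bm{p}_1\to\bm{p}_2\to\bm{p}_3\to\bm{p}_4\to\bm{p}_1$, away from the origin. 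We want $DT^4$ at $\bm{p}_1$ to have spectral radius $<1$, or better a contraction in some adapted norm.

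The design uses the piecewise-constant-direction idea of Lewis--Zhang: choose four angular sectors on which $G$ is nearly constant, equal to $\bm{v}_0+\bm{e}_j$ for four vectors $\bm{e}_j$ summing to zero. A point in sector $j$ then moves by $-\bm{e}_j$ and lands in sector $j+1$. This gives a square-like cycle, and $DT=\mathcal I-DG$ with $DG$ small inside the sectors, so the cycle is neutral. We then add a mild angular variation so that $DG(\bm{y})=\phi$-dependent terms of size $\|\bm{y}\|^{-1}$ contract radially/transversally. The angular derivative terms are tuned so that the composition of the four Jacobians is a strict contraction. The transition regions between sectors are smoothed, but the cycle points stay inside the flat regions.

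Given a strictly contracting cycle, a standard robustness lemma does the rest. There is a neighborhood $W=\bigcup_j \mathbb B_\rho(\bm{p}_j)$ and $\epsilon>0$ such that any map within $\epsilon$ of $T_{\bm{v}_0}$ in $C^0$ maps $W$ into itself in the cyclic order. Contraction of $T^4$ by factor $\lambda<1$ absorbs additive errors of size $\epsilon$ when $\epsilon\lesssim(1-\lambda)\rho$, and errors of this size arise from $\bm{v}-\bm{v}_0$, from $(\alpha_k/\alpha_{k+1}-1)\|\bm{y}_k\|$ (small once $K$ is large), and from $\bm{\nu}_k$. The catch is that $\|\bm{\nu}_k\|\le r$ is \emph{not} small relative to the cycle. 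We fix this by scaling: replace $f$ by $\lambda f$ for a large $\lambda$, or equivalently scale the cycle radius by a large factor $M$. Since $G$ is scale-invariant while step sizes $\bm{e}_j$ scale with the gradient magnitude, choosing $\phi$ with $\|\bm{e}_j\|\sim M$ makes $r$ a relatively small perturbation. So $f$ depends on $r$, as the theorem permits. With $U=\alpha_0 \mathbb B_\rho(\bm{p}_1)$, induction gives $\bm{y}_k\in W$ for all $k$, hence $c\alpha_k\le\|\bm{x}_k\|\le C\alpha_k$. It also gives $\|\nabla f_{\bm{v}}(\bm{x}_k)\|=\|G-\bm{v}\|\ge\kappa$ and smoothness near $\bm{x}_k$, because $\bm{x}_k\neq\bm{0}$ and $\alpha_k\to 0$ keeps $\bm{x}_k$ in the homogeneous region.

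The main obstacle is producing an explicit semialgebraic $\phi$ whose four-cycle is provably strictly contracting, while $\bm{v}_0$ stays outside $\widehat\partial f(\bm{0})$ and off the gradient curve. I would first try $\phi$ built from a rational function of $\cos\theta,\sin\theta$, and verify the cycle and the eigenvalues of the product Jacobian by direct computation.
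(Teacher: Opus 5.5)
Your architecture matches the paper's. It uses a degree-one homogeneous local model and the normalized iteration $\bm{x}_k/\alpha_k$. A contracting cycle of $\mathcal T=\mathcal I-\nabla f$ is made robust by a block argument. Scaling by a large $M$ makes $r$ and the tilt relatively small, and an outer radial term gives coercivity. Your description of $\partial f(\bm 0)$ and $\widehat\partial f(\bm 0)$ for homogeneous $f$ is also correct. But the theorem is an existence statement, and you leave open the step that carries its content, as you acknowledge. That step is a concrete profile $\phi$ with three properties: a strictly contracting cycle; a gradient curve that surrounds $\bm v_0$ while staying uniformly away from it for \emph{all} $\theta$; and $\bm v_0\notin\widehat\partial f(\bm 0)$. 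Nothing in the proposal shows these local and global requirements can be met simultaneously, so as written it does not prove the theorem.

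The mechanism you sketch for that step also fails as stated. For $f(s\bm u(\theta))=s\,\phi(\theta)$ one has $D\mathcal T(s\bm u(\theta))=\mathcal I-\tfrac{\phi(\theta)+\phi''(\theta)}{s}\,\bm t(\theta)\bm t(\theta)^{\mathsf T}$, where $\bm t$ is the unit tangent. So every single step fixes the radial direction, and no step can ``contract radially.'' If the cycle points sit in flat sectors where $\nabla f$ is constant, then $\phi+\phi''=0$ there and every factor equals $\mathcal I$. Locally $\mathcal T^4$ is then the identity, so the cycle is neutral and noise accumulates; smoothing only the transition regions cannot change this. Contraction must come from nonzero tangential curvature $\phi+\phi''$ \emph{at} the cycle points, and it works only because consecutive radial directions differ. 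Relatedly, single steps need not map $\mathbb B_\rho(\bm p_j)$ into $\mathbb B_\rho(\bm p_{j+1})$. Only the four-step block contracts, with intermediate iterates confined to a compact set away from $\bm 0$.

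The paper resolves this by imposing $a=1$, $a'=-1$, $a''=0$ at $\theta=i\pi/2$ on its profile $a$. This gives $\nabla f_0(\bm p_i)=\bm p_i-\bm p_{i+1}$ and $D\mathcal T(\bm p_i)=\bm p_i\bm p_i^{\mathsf T}$, hence $D(\mathcal T^2)(\bm p_i)=0$. The explicit profile $a(\theta)=1-\tfrac14\sin 4\theta-4\sin^4 2\theta$ has these jets. It also equals $-3$ on the opposite diagonal rays $\pm\bm d$, which forces $\widehat\partial f_0(\bm 0)=\varnothing$ and handles every small tilt at once. A case split on $\cos 4\theta$ then gives the global bound $a^2+a'^2\ge\tfrac14$. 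That nonlocal estimate is exactly what your sketch has no mechanism to deliver.
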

The conclusion holds for every perturbation path, without independence
or centering. It also covers every trajectory of the inexact method
$\bm{x}_{k+1}\in \bm{x}_k-\alpha_k[\partial_C f_{\bm{v}}(\bm{x}_k)+{\overline{\mathbb B}_r(\bm{0})}]$
of~\citet{BLMP2025} started in $U$. The objective $f$ depends on
$r$, but not on the tilt, initialization, or perturbations.

\begin{corollary}[Subdifferential regularity cannot be dropped]
\label{cor:ddj}
Fix $r>0$ and $\gamma\in(\tfrac12,1)$, and let $f,K,U,V$ be
given by Theorem~\ref{thm:main}.
With $\alpha_k=(K+k)^{-\gamma}$ and
$\bm{\nu}_k\overset{\mathrm{i.i.d.}}{\sim}\mathrm{Unif}(\mathbb{B}_r(\bm{0}))$, we have 
\begin{equation}\label{eq:almost-sure-attraction}
    \Prob\{\bm{x}_k\to\bm{0}\}=1
    \qquad\text{for every }(\bm{v},\bm{x}_0)\in V\times U,
\end{equation}
where $\bm{0}\notin\partial f_{\bm{v}}(\bm{0})$ for every $\bm{v}\in V$.
Thus, almost-sure convergence to a nonminimizing point occurs
on an open set of tilts and initial points.
The uniform noise and stepsizes satisfy the noise and stepsize
conditions of~\citet{DDJ2023}. Since $V$ has positive Lebesgue measure,
these tilts cannot all belong to an exceptional set of measure zero.
This disproves the conjecture of \citet{DDJ2023}
that subdifferential regularity can be removed from their
generic convergence guarantee.
\end{corollary}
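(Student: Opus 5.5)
The corollary follows by specializing Theorem~\ref{thm:main} to one noise model, so the plan is mostly bookkeeping. Fix $r>0$ and $\gamma\in(\tfrac12,1)$, and let $f,K,U,V$ and $c,C,\kappa$ be as in Theorem~\ref{thm:main}. Draw $\bm{\nu}_k$ i.i.d.\ from $\mathrm{Unif}(\mathbb B_r(\bm{0}))$. Then $\|\bm{\nu}_k\|<r$ holds for every $k$ and every sample path, not just almost surely. Part~(i) of the theorem is deterministic and uniform over admissible perturbation sequences. So for each $(\bm{v},\bm{x}_0)\in V\times U$ and each realization we get $\|\bm{x}_k\|\le C\alpha_k\to0$. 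Hence the event $\{\bm{x}_k\to\bm{0}\}$ is the whole sample space, which gives~\eqref{eq:almost-sure-attraction}. Measurability of the iterates is not an issue, because $\bm{g}$ is Borel and the update~\eqref{eq:basic-update} composes Borel maps.

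The nonminimizing claim comes from part~(ii): $\operatorname{dist}(\bm{0},\partial f_{\bm{v}}(\bm{0}))\ge\kappa>0$, so $\bm{0}\notin\partial f_{\bm{v}}(\bm{0})$. Since $\widehat\partial f_{\bm{v}}(\bm{0})\subset\partial f_{\bm{v}}(\bm{0})$, this gives $\bm{0}\notin\widehat\partial f_{\bm{v}}(\bm{0})$. By the preliminaries, every local minimizer has $\bm{0}$ in its Fréchet subdifferential, so the origin is not a local minimizer of $f_{\bm{v}}$. It is still Clarke critical, since $\bm{0}\in\partial_C f_{\bm{v}}(\bm{0})$. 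The sets $V$ and $U$ are nonempty and open, so they have positive Lebesgue measure.

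The step needing the most care is checking the hypotheses of \citet{DDJ2023}. I would go through them one at a time.
\begin{itemize}
\item \emph{Stepsizes.} $\alpha_k=(K+k)^{-\gamma}$ with $\gamma\in(\tfrac12,1)$ gives $\sum\alpha_k=\infty$ and $\sum\alpha_k^2<\infty$. It also has the polynomial decay and ratio regularity ($\alpha_{k+1}/\alpha_k\to1$, $\alpha_k\sim k^{-\gamma}$) typically required there. This is why $\gamma=1$ is excluded: their conditions use $\gamma<1$.
\item \emph{Noise.} The noise is i.i.d., independent of the past, mean zero by symmetry of the ball, and bounded. So it forms a martingale difference sequence with bounded (hence all) moments. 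It is also uniformly nondegenerate: its covariance is $\tfrac{r^2}{d+2}\mathcal I$, and it has a density bounded below on a ball. This covers the isotropy or "noise excites every direction" condition used for saddle avoidance.
\item \emph{Function class.} $f$ is semialgebraic, hence definable, and globally Lipschitz. The tilts $f_{\bm{v}}$ are coercive, and the trajectories in question are bounded. The only hypothesis missing is subdifferential regularity, which fails at $\bm{0}$ since $\widehat\partial f_{\bm{v}}(\bm{0})\ne\partial_C f_{\bm{v}}(\bm{0})$.
\end{itemize}

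Finally, the conclusion of \citet{DDJ2023} with regularity removed would be this: for almost every $\bm{v}$, bounded iterates converge almost surely to a local minimizer. Any exceptional set would then be null. Since $V$ has positive measure, it must contain a tilt $\bm{v}$ outside that exceptional set. For this $\bm{v}$, starting from any $\bm{x}_0\in U$, the iterates are bounded and converge with probability one to the nonminimizer $\bm{0}$. This contradicts the conjectured guarantee.
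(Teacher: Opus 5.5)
Your proposal is correct and follows the paper's approach. The corollary is a direct specialization of Theorem~\ref{thm:main}: uniform noise on the open ball satisfies $\|\bm{\nu}_k\|\le r$ on every sample path, so part~(i) gives convergence on every realization, and part~(ii) gives $\bm{0}\notin\partial f_{\bm{v}}(\bm{0})$. Your itemized check of the conditions of \citet{DDJ2023} spells out what the paper only asserts.
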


Corollary~\ref{cor:ddj} uses $\gamma\in(\tfrac{1}{2},1)$ to match
the stepsize range in the saddle-avoidance result
of~\citet{DDJ2023}.
Theorem~\ref{thm:main} also covers $\gamma=1$ and is proved
in Section~\ref{sec:bounded-trap}.

\paragraph{Proof overview of Theorem~\ref{thm:main}.}
The proof proceeds from a stability criterion to a concrete
local model and then to the global objective.
\begin{enumerate}
\item[(i)] \emph{Stability of the normalized iteration.}
To obtain $\|\bm{x}_k\|=\Theta(\alpha_k)$, we keep
$\bm{z}_k:=\tfrac{\bm{x}_k}{\alpha_k}$ bounded and bounded away from zero.
Section~\ref{sec:cycle-principle} establishes a sufficient
condition for this control: for a positively homogeneous
function $F$ that is $C^2$ away from zero, the map
$\mathcal T:=\mathcal I-\nabla F$ has a cycle of nonzero points
$\bm{p}_0,\ldots,\bm{p}_{m-1}$ satisfying
$\|D(\mathcal T^m)(\bm{p}_0)\|<1$.
Proposition~\ref{prop:cycle-principle} shows that nearby
normalized iterates remain near the cycle under small
bounded perturbations and stepsize ratios close to one.

\item[(ii)] \emph{Construction of the local model.}
We then seek a globally Lipschitz, semialgebraic,
positively homogeneous function $f_0:\R^2\to\R$, smooth
away from zero, whose normalized map has such a cycle.
To ensure the required stationarity properties, we also impose
\[
    \bm{0}\in\operatorname{int}\partial_C f_0(\bm{0}),
    \qquad \widehat\partial f_0(\bm{0})=\varnothing,
    \qquad
    \inf_{\|\bm{u}\|=1}\|\nabla f_0(\bm{u})\|>0.
\]
These conditions make the origin Clarke critical but not
limiting critical under all sufficiently small linear tilts,
and give a uniform positive gradient bound away from zero.
Section~\ref{sec:angular-profile} constructs $f_0$ and verifies
these properties, including the cycle condition, in
{Proposition}~\ref{lem:trap-geometry}.

\item[(iii)] \emph{Scaling and coercivity.}
The stability estimate allows only small perturbations.
We therefore scale $f_0$ so that the prescribed noise and
admissible tilts become small in the normalized iteration.
To obtain coercivity, we then modify the scaled function
outside a ball containing all trapped iterates so that every
admissible tilted objective grows to infinity.
This modification preserves global Lipschitz continuity and
semialgebraicity, as well as the trajectory and the local
geometry at zero; see Section~\ref{sec:scaling-coercivity}.
\end{enumerate}

\section{Stability of the normalized iteration}
\label{sec:cycle-principle}

We begin with the {stability criterion}.
The desired bounds
$c\alpha_k\le\|\bm{x}_k\|\le C\alpha_k$ are equivalent to $c\le\|\bm{z}_k\|\le C$. 
Thus, our task is to keep the normalized iterates in a
bounded set away from zero.

Let $F:\R^d\to\R$ be positively homogeneous of degree one
and $C^2$ on $\R^d\setminus\{\bm{0}\}$.
For positive stepsizes $\alpha_k$, consider
\[
    \bm{x}_{k+1}=\bm{x}_k-\alpha_k\nabla F(\bm{x}_k)+\alpha_k \bm{w}_k,
\]
where $\bm{w}_k$ represents a perturbation.
Since $\nabla F(t\bm{z})=\nabla F(\bm{z})$ for $t>0$, the normalized
iteration takes the form
\[
    \bm{z}_{k+1}=\rho_k\bigl(\mathcal T(\bm{z}_k)+\bm{w}_k\bigr),
    \qquad
    \mathcal T(\bm{z}):=\bm{z}-\nabla F(\bm{z}),
    \qquad
    \rho_k:=\frac{\alpha_k}{\alpha_{k+1}}.
\]

When $\rho_k=1$ and $\bm{w}_k=\bm{0}$, the normalized update reduces to
$\bm{z}_{k+1}=\mathcal T(\bm{z}_k)$.
Suppose that $\mathcal T$ has a cycle of nonzero points
$\bm{p}_0,\ldots,\bm{p}_{m-1}$ satisfying
$\|D(\mathcal T^m)(\bm{p}_0)\|<1$. Here $D$ denotes the Jacobian and $\|\cdot\|$ denotes
the operator norm induced by the Euclidean norm. 
Then the $m$-step map $\mathcal T^m$ maps a sufficiently small
closed ball around $\bm{p}_0$ into its interior.
This inclusion persists when $|\rho_k-1|$ and $\|\bm{w}_k\|$
are uniformly small, while the intermediate iterates remain
near the corresponding cycle points.
Thus, if $\bm{z}_0$ lies in this ball, the normalized iterates
remain bounded and bounded away from zero, yielding
$c\alpha_k\le\|\bm{x}_k\|\le C\alpha_k$.

The following proposition formalizes this argument.
Its proof uses the classical local contraction argument
for fixed-point iterations; see
\citet[Section~10.1]{OR1970}.
We include the perturbation estimates needed here. 

\begin{proposition}
\label{prop:cycle-principle}
Let $F:\R^d\to\R$ be positively homogeneous of degree one
and $C^2$ away from zero.
Suppose $\mathcal T=\mathcal{I}-\nabla F$ has a cycle
{$\bm{p}_0,\ldots,\bm{p}_{m-1}\in\R^d\setminus\{\bm{0}\}$} with
$\|D(\mathcal T^m)(\bm{p}_0)\|<1$.
Then there exist $\delta,\varepsilon,c,C>0$ such that,
for positive stepsizes $\alpha_k\to0$ satisfying
\[
    \left|\frac{\alpha_k}{\alpha_{k+1}}-1\right|\le\varepsilon,
    \qquad \|\bm{w}_k\|\le\varepsilon
    \qquad{(k\ge0)},
\]
the iteration
\[
    \bm{x}_{k+1}=\bm{x}_k-\alpha_k\nabla F(\bm{x}_k)+\alpha_k \bm{w}_k,
    \qquad \bm{x}_0\in\alpha_0 \mathbb B_\delta(\bm{p}_0),
\]
is well defined and satisfies
$c\alpha_k\le\|\bm{x}_k\|\le C\alpha_k$ for all $k\ge0$.
\end{proposition}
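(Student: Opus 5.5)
We work with the normalized iterates $\bm{z}_k:=\bm{x}_k/\alpha_k$. Positive homogeneity gives $\nabla F(\bm{x}_k)=\nabla F(\bm{z}_k)$ whenever $\bm{z}_k\neq\bm{0}$, so
\[
\bm{z}_{k+1}=\rho_k\bigl(\mathcal T(\bm{z}_k)+\bm{w}_k\bigr),\qquad \rho_k=\alpha_k/\alpha_{k+1}.
\]
The claim $c\alpha_k\le\|\bm{x}_k\|\le C\alpha_k$ is then equivalent to keeping every $\bm{z}_k$ in a compact set avoiding the origin. Well-definedness of the iteration also follows once every $\bm{z}_k$ is nonzero, because $\nabla F$ is then evaluated only away from $\bm{0}$.

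The plan is to show that a small closed ball $B:=\overline{\mathbb B}_\delta(\bm{p}_0)$ is mapped into itself by every $m$-step perturbed composition. We also track the intermediate iterates along the way.

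\textbf{Step 1 (unperturbed contraction).} Let $q:=\|D(\mathcal T^m)(\bm{p}_0)\|<1$ and fix $q'\in(q,1)$. Choose $\eta>0$ with $\overline{\mathbb B}_\eta(\bm{p}_j)$ disjoint from $\bm{0}$ for every $j$. On the compact set $\bigcup_j\overline{\mathbb B}_\eta(\bm{p}_j)$, $\nabla F$ is $C^1$, so $\mathcal T$ is $C^1$ there and Lipschitz with some constant $L\ge1$. By continuity of $D(\mathcal T^m)$ at $\bm{p}_0$, there is $\delta>0$ such that:
\begin{itemize}
\item $\delta L^{m}<\eta$, so that $\mathcal T^j(\overline{\mathbb B}_{\delta}(\bm{p}_0))\subset\mathbb B_{\eta}(\bm{p}_j)$ for $j<m$;
\item $\|D(\mathcal T^m)\|\le q'$ on $\overline{\mathbb B}_\delta(\bm{p}_0)$.
\end{itemize}
The mean value inequality on this convex ball then gives $\|\mathcal T^m(\bm{z})-\bm{p}_0\|\le q'\|\bm{z}-\bm{p}_0\|\le q'\delta$.

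\textbf{Step 2 (perturbation estimate).} Write one perturbed step as $S_k(\bm{z})=\rho_k(\mathcal T(\bm{z})+\bm{w}_k)$. For $\bm{z}$ in the $\eta$-tubes,
\[
\|S_k(\bm{z})-\mathcal T(\bm{z})\|\le|\rho_k-1|\,\|\mathcal T(\bm{z})\|+\rho_k\|\bm{w}_k\|\le \varepsilon(M+1+\varepsilon)=:\varepsilon M',
\]
where $M:=\max\|\mathcal T\|$ over the tubes. Let $\bm{y}_j$ be the perturbed iterates and $\bm{u}_j:=\mathcal T^j(\bm{z})$ the exact ones, both started from $\bm{z}\in B$. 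Induction gives
\[
\|\bm{y}_j-\bm{u}_j\|\le \varepsilon M'(1+L+\dots+L^{j-1})\le \varepsilon M' mL^{m}.
\]
This induction needs $\bm{y}_j$ to remain in the tube of radius $\eta$ around $\bm{p}_j$. That requirement is ensured if we enlarge the tubes slightly (use radius $\eta$ for $\bm{y}_j$ and keep $\delta L^m<\eta/2$) and take $\varepsilon M' mL^m<\eta/2$. Then choose $\varepsilon$ so small that additionally $\varepsilon M' mL^m\le(1-q')\delta$. It follows that $\bm{y}_m\in B$, and all intermediate $\bm{y}_j$ lie in $\overline{\mathbb B}_\eta(\bm{p}_j)$.

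\textbf{Step 3 (conclusion).} Start with $\bm{z}_0=\bm{x}_0/\alpha_0\in\mathbb B_\delta(\bm{p}_0)$ and apply Step 2 to each block $k=nm,\dots,nm+m-1$. Induction on $n$ gives $\bm{z}_{nm+j}\in\overline{\mathbb B}_\eta(\bm{p}_j)$ for all $n$ and $j$. Hence we may take
\[
c:=\min_j\bigl(\|\bm{p}_j\|-\eta\bigr)>0,\qquad C:=\max_j\bigl(\|\bm{p}_j\|+\eta\bigr).
\]
Only the bound on $\rho_k$ and $\|\bm{w}_k\|$ is used; $\alpha_k\to0$ plays no role here.

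The main obstacle is the bookkeeping in Step 2: the perturbed intermediate iterates must stay in the region where $\mathcal T$ is $C^1$ and Lipschitz before the Lipschitz estimate can be applied. This is handled by choosing $\eta$ first, then $\delta$, then $\varepsilon$.
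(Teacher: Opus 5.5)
Your proposal is correct and follows essentially the same route as the paper: a contraction of $\mathcal T^m$ on a small closed ball around $\bm{p}_0$, robustness of that self-map under small deviations of $\rho_k$ and $\bm{w}_k$ over one block of $m$ steps, and block-by-block induction with all intermediate iterates kept in a compact set avoiding the origin. The only difference is how the perturbation bound is obtained: you derive it explicitly from Lipschitz constants and a discrete Gronwall estimate, whereas the paper uses joint continuity of the perturbed $m$-step composition together with compactness of the ball.
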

\begin{proof}
Choose $\lambda\in(\|D(\mathcal T^m)(\bm{p}_0)\|,1)$.
Since the cycle avoids zero and $\mathcal T$ is $C^1$
away from zero, we may choose $\delta>0$ such that
$\mathbb B:=\overline{\mathbb B}_\delta(\bm{p}_0)$ satisfies
\[
    \|\mathcal T^m(\bm{z})-\bm{p}_0\|
    \le\lambda\|\bm{z}-\bm{p}_0\|,
    \qquad \bm{z}\in\mathbb B,
\]
and $\mathcal T^j(\mathbb B)$ is well defined and avoids
zero for $0\le j\le m$.

Define $\mathcal T_{\rho,\bm{w}}(\bm{z}):=\rho(\mathcal T(\bm{z})+\bm{w})$
and write
\[
    \Phi_{\bm{\theta}}
    :=\mathcal T_{\rho_{m-1},\bm{w}_{m-1}}\circ\cdots
      \circ\mathcal T_{\rho_0,\bm{w}_0},
    \qquad
    \bm{\theta}:=((\rho_i-1,\bm{w}_i))_{i=0}^{m-1}.
\]
At $\bm{\theta}=\bm{0}$, we have $\Phi_{\bm{0}}=\mathcal T^m$.
Since the unperturbed intermediate iterates avoid zero,
these compositions are well defined and jointly continuous
in $(\bm{z},\bm{\theta})$ on a neighborhood of $\mathbb B\times\{\bm{0}\}$.
Compactness of $\mathbb B$ therefore gives
\[
    \sup_{\bm{z}\in\mathbb B}
    \|\Phi_{\bm{\theta}}(\bm{z})-\mathcal T^m(\bm{z})\|\rightarrow0
    \qquad\text{as }\bm{\theta}\to\bm{0}.
\]
Hence we may choose $\varepsilon>0$ such that
$|\rho_i-1|\le\varepsilon$ and $\|\bm{w}_i\|\le\varepsilon$
for $0\le i<m$ imply
\[
    \sup_{\bm{z}\in\mathbb B}
    \|\Phi_{\bm{\theta}}(\bm{z})-\mathcal T^m(\bm{z})\|
    \le\frac{1-\lambda}{2}\delta.
\]
By reducing $\varepsilon$ if necessary, all intermediate
iterates lie in a common compact set
$\mathbb{Q}\subset\R^d\setminus\{\bm{0}\}$ containing $\mathbb B$.
Consequently,
\[
    \|\Phi_{\bm{\theta}}(\bm{z})-\bm{p}_0\|
    \le\|\Phi_{\bm{\theta}}(\bm{z})-\mathcal T^m(\bm{z})\|
       +\|\mathcal T^m(\bm{z})-\bm{p}_0\|
    \le\frac{1+\lambda}{2}\delta<\delta,
    \qquad \bm{z}\in\mathbb B.
\]
Thus every admissible $m$-step composition maps
$\mathbb B$ into itself.

Starting from $\bm{z}_0=\tfrac{\bm{x}_0}{\alpha_0}\in\mathbb B$, consider
\[
    \bm{z}_{k+1}
    =\frac{\alpha_k}{\alpha_{k+1}}
      \bigl(\mathcal T(\bm{z}_k)+\bm{w}_k\bigr).
\]
Applying the preceding inclusion to successive blocks
of $m$ steps gives $\bm{z}_{jm}\in\mathbb B$ and $\bm{z}_k\in \mathbb{Q}$
for all $j,k\ge0$.
By homogeneity, $\bm{x}_k:=\alpha_k\bm{z}_k$ satisfies the stated
iteration. Taking
$
    c:=\min_{\bm{z}\in \mathbb{Q}}\|\bm{z}\|>0$ and $
    C:=\max_{\bm{z}\in \mathbb{Q}}\|\bm{z}\|$
proves the bounds and ensures that every gradient
evaluation is well defined.
\end{proof}

\section{A semialgebraic counterexample}
\label{sec:bounded-trap}

We now construct a positively homogeneous semialgebraic
function on $\R^2$ whose origin is Clarke critical
but not limiting critical, while its normalized map
has a contracting cycle.
{The stability criterion keeps the normalized iterates
near the cycle under sufficiently small bounded perturbations.}
Finally, scaling and a radial correction yield the globally
Lipschitz, coercive example in Theorem~\ref{thm:main}.

\subsection{Construction and geometry of the local model}
\label{sec:angular-profile}
Let
\[
    \bm{p}_0:=(1,0),\quad \bm{p}_1:=(0,1),\quad
    \bm{p}_2:=(-1,0),\quad \bm{p}_3:=(0,-1),
\]
with $\bm{p}_{i+4}=\bm{p}_i$.
We construct $f_0$ so that $\mathcal T:=\mathcal{I}-\nabla f_0$ maps
$\bm{p}_0\mapsto \bm{p}_1\mapsto \bm{p}_2\mapsto \bm{p}_3\mapsto \bm{p}_0$
and satisfies $\|D(\mathcal T^4)(\bm{p}_0)\|<1$, while the origin is
Clarke critical but not limiting critical. 

We specify the values of $f_0$ on the unit circle and extend
linearly along each ray. Write $\bm{u}(\theta):=(\cos\theta,\sin\theta)$
and define
\begin{equation}\label{eq:trap-polar}
 \begin{aligned}
 f_0(\bm{0})&:=0,\qquad f_0({s}\bm{u}(\theta)):={s}\cdot a(\theta)\quad({s}>0),\\
 a(\theta)&:=1-\tfrac14\sin(4\theta)-4\sin^4(2\theta).
 \end{aligned}
\end{equation}
For each fixed angle $\theta$, the function value
$f_0({s}\bm{u}(\theta))$ varies linearly with the radius ${s}$.
As we move away from the origin in this direction, $f_0$ increases
if $a(\theta)>0$ and decreases if $a(\theta)<0$.

{At the coordinate angles $\theta_i:=\tfrac{i\pi}{2}$, we have
$\bm{u}(\theta_i)=\bm{p}_i$.}
The profile $1-\tfrac14\sin(4\theta)$ gives
$a=1$, $a'=-1$, and $a''=0$ at these coordinate angles.
These identities make $\mathcal T:=\mathcal{I}-\nabla f_0$ send $\bm{p}_i$ to $\bm{p}_{i+1}$
and give $D(\mathcal T^4)(\bm{p}_0)=0$, as verified below.
The correction $-4\sin^4(2\theta)$ preserves all three identities
and makes $a=-3$ at the diagonal angles.
Consequently, $f_0$ increases along the coordinate rays and
decreases along the diagonal rays, so the origin is not a
local minimizer. Figure~\ref{fig:angular-construction} shows
the profile, the resulting surface, and the four-cycle.

The equivalent Cartesian expression
\begin{equation}\label{eq:trap-function}
 f_0(\bm{x})=\|\bm{x}\|
 -\frac{x_1x_2(x_1^2-x_2^2)}{\|\bm{x}\|^3}
 -\frac{64x_1^4x_2^4}{\|\bm{x}\|^7},\qquad \bm{x}\ne\bm{0},
\end{equation}
makes semialgebraicity explicit.
The following {proposition} verifies the cycle condition and shows
that the origin is Clarke critical but not limiting critical.
\begin{figure}[t]
\centering
\resizebox{\linewidth}{!}{%
\begin{tikzpicture}[font=\small,>=Stealth]

\definecolor{profileblue}{RGB}{32,85,125}
\definecolor{profileorange}{RGB}{190,95,61}

\pgfplotsset{
  colormap={profilemap}{
    rgb(0cm)=(0.38,0.56,0.72);
    rgb(3cm)=(0.96,0.96,0.94);
    rgb(4.15cm)=(0.84,0.53,0.39)
  }
}

\begin{axis}[
  at={(0cm,0cm)},
  anchor=south west,
  width=6.5cm,
  height=5.2cm,
  scale only axis,
  xmin=0,
  xmax=1.570796327,
  ymin=-3.35,
  ymax=1.5,
  axis lines=left,
  axis line style={black!60},
  tick style={black!55},
  xtick={0,0.7853981634,1.570796327},
  xticklabels={$0$,$\tfrac{\pi}{4}$,$\tfrac{\pi}{2}$},
  ytick={-3,-2,-1,0,1},
  xlabel={$\theta$},
  ylabel={$a(\theta)$},
  xlabel style={
    at={(axis description cs:1.025,0)},
    anchor=west
  },
  ylabel style={
    rotate=-90,
    at={(axis description cs:0,1.015)},
    anchor=south
  },
  ymajorgrids,
  grid style={black!9},
  tick label style={font=\small},
  legend style={
    draw=none,
    fill=none,
    font=\small,
    at={(0.5,-0.18)},
    anchor=north,
    legend columns=2,
    column sep=12pt
  },
  clip=false
]

\addplot[
  black!45,
  dashed,
  line width=0.8pt,
  domain=0:pi/2,
  samples=241
]
  {1-0.25*sin(deg(4*x))};
\addlegendentry{$1-\tfrac14\sin(4\theta)$}

\addplot[
  profileblue,
  line width=1.15pt,
  domain=0:pi/2,
  samples=321
]
  {1-0.25*sin(deg(4*x))-4*(sin(deg(2*x)))^4};
\addlegendentry{$a(\theta)$}

\addplot[
  only marks,
  mark=*,
  mark size=2pt,
  profileblue,
  forget plot
]
  coordinates {(0,1) (1.570796327,1)};

\addplot[
  only marks,
  mark=*,
  mark size=2pt,
  profileorange,
  forget plot
]
  coordinates {(0.7853981634,-3)};

\end{axis}

\node[anchor=west,font=\normalsize]
  at (-0.1cm,6cm) {(a) Angular profile};

\begin{axis}[
  at={(6.5cm,-0.8cm)},
  anchor=south west,
  width=10cm,
  height=8cm,
  view={-55}{27},
  hide axis,
  xmin=-1.12,
  xmax=1.12,
  ymin=-1.12,
  ymax=1.12,
  zmin=-3.2,
  zmax=1.3,
  colormap name=profilemap,
  point meta min=-3,
  point meta max=1.15,
  colorbar right,
  colorbar style={
    hide axis=false,
    at={(1.02,0.5)},
    anchor=west,
    width=0.20cm,
    height=4.5cm,
    title={$f_0(\bm{x})$},
    ytick={-3,-2,-1,0,1},
    tick label style={font=\small},
    title style={font=\small},
    scaled ticks=false
  },
  z buffer=sort,
  clip=false
]

\addplot3[
  surf,
  shader=interp,
  draw=none,
  domain=0:360,
  y domain=0:1,
  samples=101,
  samples y=18
]
  ({y*cos(x)},
   {y*sin(x)},
   {y*(1-0.25*sin(4*x)-4*(sin(2*x))^4)});

\addplot3[
  black!50,
  densely dotted,
  thin,
  z buffer=none
]
  coordinates {(1,0,1) (0,0,0) (-1,0,1)};

\addplot3[
  black!50,
  densely dotted,
  thin,
  z buffer=none
]
  coordinates {(0,1,1) (0,0,0) (0,-1,1)};

\addplot3[
  profileblue,dashed,line width=0.8pt,
  ->,z buffer=none
]
  coordinates {(1,0,1) (0,1,1)};

\addplot3[
  profileblue,dashed,line width=0.8pt,
  ->,z buffer=none
]
  coordinates {(0,1,1) (-1,0,1)};

\addplot3[
  profileblue,dashed,line width=0.8pt,
  ->,z buffer=none
]
  coordinates {(-1,0,1) (0,-1,1)};

\addplot3[
  profileblue,dashed,line width=0.8pt,
  ->,z buffer=none
]
  coordinates {(0,-1,1) (1,0,1)};

\addplot3[
  only marks,
  mark=*,
  mark size=1.9pt,
  profileblue
]
  coordinates {
    (1,0,1)
    (0,1,1)
    (-1,0,1)
    (0,-1,1)
  };

\addplot3[
  only marks,
  mark=*,
  mark size=1.5pt,
  black
]
  coordinates {(0,0,0)};

\node[above right,inner sep=3pt]
  at (axis cs:1,0,1) {$\bm{p}_0$};
\node[above,inner sep=3pt]
  at (axis cs:0,1,1) {$\bm{p}_1$};
\node[above left,inner sep=3pt]
  at (axis cs:-1,0,1) {$\bm{p}_2$};
\node[below right,inner sep=3pt]
  at (axis cs:0,-1,1) {$\bm{p}_3$};
\node[below,inner sep=5pt]
  at (axis cs:0,0,0) {$f_0(\bm{0})=0$};

\end{axis}

\node[anchor=west,font=\normalsize]
  at (7.9cm,6cm) {(b) Surface and four-cycle};

\end{tikzpicture}%
}
\caption{
Construction of the homogeneous local model.
(a) The correction $-4\sin^4(2\theta)$ makes the diagonal
value negative while preserving the value and first two
derivatives at the coordinate angles.
(b) The surface $f_0({s}\bm{u}(\theta))={s} a(\theta)$ over the unit
disk; the color bar gives the function values.
Dashed arrows show the cycle $\mathcal{T}(\,\bm{p}_i)=\bm{p}_{i+1}$,
with each point lifted to height $f_0(\bm{p}_i)=1$.
The arrows indicate discrete updates, not paths along
the surface.
}
\label{fig:angular-construction}
\end{figure}
\begin{proposition}
\label{lem:trap-geometry}
The function $f_0$ is semialgebraic, positively homogeneous
of degree one, globally $18$-Lipschitz, and $C^\infty$
away from zero.
Let $P_i:=\bm{p}_i\bm{p}_i^{\mathsf T}$.
\begin{enumerate}
\item[(i)]
For $i=0,1,2,3$, with indices read modulo four,
\begin{equation}\label{eq:trap-cycle-jets}
\mathcal T(\bm{p}_i)=\bm{p}_{i+1},\qquad D\mathcal T(\bm{p}_i)=P_i,\qquad
 D(\mathcal T^2)(\bm{p}_i)=0.
\end{equation}
In particular, $D(\mathcal T^4)(\bm{p}_0)=0$.

\item[(ii)]
We have
$
 [-1,1]\times[-1,1]\subseteq\partial_C f_0(\bm{0})$, $
  \widehat\partial f_0(\bm{0})=\varnothing$
and
\begin{equation}\label{eq:trap-gradient-lower}
 \inf_{\|\bm{u}\|=1}\|\nabla f_0(\bm{u})\|\ge\frac12.
\end{equation}
Consequently,
\[
 \bm{0}\in\operatorname{int}\partial_C f_0(\bm{0}),\qquad
 \operatorname{dist}(\bm{0},\partial f_0(\bm{0}))\ge\frac12.
\]
\end{enumerate}
\end{proposition}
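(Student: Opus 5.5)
The plan is to work entirely in polar coordinates, using the standard formula for the gradient of a degree-one homogeneous function $f_0(s\bm{u}(\theta))=s\,a(\theta)$:
\[
\nabla f_0(s\bm{u}(\theta))=a(\theta)\,\bm{u}(\theta)+a'(\theta)\,\bm{u}'(\theta),\qquad \bm{u}'(\theta)=(-\sin\theta,\cos\theta).
\]
The preliminary claims come first.
\begin{itemize}
\item Semialgebraicity and homogeneity are immediate from the Cartesian expression \eqref{eq:trap-function}.
\item Smoothness away from zero holds because each term is a rational function of $\bm{x}$ and $\|\bm{x}\|$, and $\|\bm{x}\|$ is $C^\infty$ on $\R^2\setminus\{\bm{0}\}$.
\item For the Lipschitz bound, the gradient is constant along rays, so $\|\nabla f_0\|^2=a^2+a'^2$. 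We have $|a|\le 1+\tfrac14+4$ and $a'(\theta)=-\cos 4\theta-32\sin^3(2\theta)\cos(2\theta)$, so $|a'|\le 33$. These crude bounds give at most about $34$, so reaching the constant $18$ needs a sharper estimate. I would bound $32|\sin^3 2\theta\cos2\theta|\le 32\cdot\max_t|t^3\sqrt{1-t^2}|=32\cdot\tfrac{3\sqrt3}{16}\approx 10.4$, which gives $|a'|\le 11.4$ and $\|\nabla f_0\|\le\sqrt{5.25^2+11.4^2}<18$.
\item Global Lipschitz continuity then follows since $f_0$ is continuous at $\bm{0}$ and the gradient bound holds on $\R^2\setminus\{\bm{0}\}$. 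Any segment through the origin can be perturbed slightly, or we can use that the origin has measure zero on lines.
\end{itemize}

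For (i), evaluate at $\theta_i=i\pi/2$, where $a=1$ and $a'=-1$. Then $\nabla f_0(\bm{p}_i)=\bm{p}_i-\bm{u}'(\theta_i)=\bm{p}_i-\bm{p}_{i+1}$, hence $\mathcal T(\bm{p}_i)=\bm{p}_{i+1}$.

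For the Jacobian, I use that the Hessian of a degree-one homogeneous function at $\bm{u}(\theta)$ is $(a+a'')\,\bm{u}'\bm{u}'^{\mathsf T}$. This follows from differentiating the gradient formula in $\theta$ and noting the radial derivative vanishes. With $a+a''=1$ at the coordinate angles (this uses $a''=0$, and one checks that the $\sin^4$ correction has vanishing derivatives up to order three there), we get
\[
D\mathcal T(\bm{p}_i)=I-\bm{p}_{i+1}\bm{p}_{i+1}^{\mathsf T}=P_i,
\]
since $\bm{p}_i\perp\bm{p}_{i+1}$ in $\R^2$. The chain rule then gives $D(\mathcal T^2)(\bm{p}_i)=P_{i+1}P_i=0$, and hence $D(\mathcal T^4)(\bm{p}_0)=0$.

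For (ii), the Clarke inclusion comes from limits of gradients. Taking $\bm{x}\to\bm{0}$ along the ray of $\bm{p}_i$ gives $\nabla f_0(\bm{p}_i)=\bm{p}_i-\bm{p}_{i+1}$, which yields the four vectors $(\pm1,\pm1)$. Their convex hull is $[-1,1]^2$, and these limits lie in $\partial f_0(\bm{0})\subseteq\partial_C f_0(\bm{0})$.

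To show $\widehat\partial f_0(\bm{0})=\varnothing$, note that $\bm{w}$ would have to satisfy $a(\theta)\ge\langle\bm{w},\bm{u}(\theta)\rangle$ for all $\theta$. At the four diagonal angles $a=-3$. Summing the inequalities at opposite diagonals gives $-6\ge 0$, a contradiction.

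For the gradient lower bound \eqref{eq:trap-gradient-lower}, I need $a^2+a'^2\ge\tfrac14$ everywhere. This is the step I expect to be the main obstacle: it is a genuine one-variable trigonometric estimate near the zeros of $a$, where one must show $|a'|$ is not small. My approach would be:
\begin{itemize}
\item Reduce by symmetry. The profile $a$ has period $\pi/2$, so $\theta\in[0,\pi/2]$ suffices.
\item Split into the region where $|a|\ge\tfrac12$, where the bound is trivial, and its complement.
\item On the complement, show that $\sin^4(2\theta)$ is forced into a window, roughly $[0.06,0.4]$. There $\sin 2\theta$ and $\cos 2\theta$ are bounded away from zero, so the term $32\sin^3\cos$ dominates $|\cos4\theta|\le1$. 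This should give $|a'|\ge\tfrac12$, possibly after a finer subdivision or interval-arithmetic-style checks at a few points.
\end{itemize}

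Finally, the limiting subdifferential $\partial f_0(\bm{0})$ consists of limits of gradients, using that the Fréchet subdifferential at the origin is empty. By homogeneity these are limits of $\nabla f_0(\bm{u})$ with $\|\bm{u}\|=1$, so their norms are at least $\tfrac12$. This gives $\operatorname{dist}(\bm{0},\partial f_0(\bm{0}))\ge\tfrac12$. Interiority of $\bm{0}$ in $\partial_C f_0(\bm{0})$ follows from the inclusion of the square $[-1,1]^2$.
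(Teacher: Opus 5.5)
Your proposal is correct and follows essentially the same route as the paper: polar formulas for the gradient and Hessian, the jets $a=1$, $a'=-1$, $a''=0$ at the coordinate angles combined with the chain rule, gradient limits along the axes for the Clarke square, opposite diagonal rays for $\widehat\partial f_0(\bm{0})=\varnothing$, and a case split for the gradient bound (you split on $|a|$, whereas the paper splits on $\cos(4\theta)$). Your hedge on that last step is unnecessary: $|a|<\tfrac12$ forces $\sin^4(2\theta)\in(\tfrac1{16},\tfrac7{16})$, hence $|\sin 2\theta|>\tfrac12$ and $\cos^2(2\theta)>1-\tfrac{\sqrt7}{4}>\tfrac13$, so $32|\sin^3(2\theta)\cos(2\theta)|>\tfrac{4}{\sqrt3}$ and $|a'|>\tfrac{4}{\sqrt3}-1>\tfrac12$ with no further subdivision.
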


\begin{proof}
The Cartesian formula~\eqref{eq:trap-function} shows that
$f_0$ is semialgebraic and smooth away from zero.
Positive homogeneity follows from~\eqref{eq:trap-polar}.
Moreover, $|a(\theta)|\le \tfrac{13}{4}$ for every $\theta$, so
\[
 |f_0(\bm{x})|\le\frac{13}{4}\|\bm{x}\|\rightarrow0
 \qquad\text{as }\bm{x}\to\bm{0}.
\]
Since $f_0(\bm{0})=0$, this proves continuity at the origin.

Let $\bm{t}(\theta):=(-\sin\theta,\cos\theta)$ denote the unit
tangent vector to the unit circle. Differentiating the
polar formula gives
\begin{equation}\label{eq:trap-gradient-hessian}
 \begin{aligned}
 \nabla f_0({s}\bm{u}(\theta))
   &=a(\theta)\bm{u}(\theta)+a'(\theta)\bm{t}(\theta),\\
 \nabla^2 f_0({s}\bm{u}(\theta))
   &=\frac{a(\theta)+a''(\theta)}{{s}}
     \bm{t}(\theta)\bm{t}(\theta)^{\mathsf T}.
 \end{aligned}
\end{equation}
The derivative
\[
 a'(\theta)=-\cos(4\theta)
     -8(1-\cos(4\theta))\sin(4\theta)
\]
satisfies $|a'(\theta)|\le17$.
Since $\bm{u}(\theta)$ and $\bm{t}(\theta)$ are orthonormal,
\[
 \|\nabla f_0({s}\bm{u}(\theta))\|
 =\sqrt{a(\theta)^2+a'(\theta)^2}
 \le\sqrt{\left(\tfrac{13}{4}\right)^2+17^2}<18.
\]
For any segment avoiding zero, integration of this gradient
bound gives $|f_0(\bm{x})-f_0(\bm{y})|\le18\|\bm{x}-\bm{y}\|$.
If the segment contains zero, split it at zero and use
continuity there to obtain the same bound.
Thus $f_0$ is globally $18$-Lipschitz.

\emph{(i) }
At $\theta_i:=\tfrac{i\pi}{2}$, the angular profile satisfies
\[
 a(\theta_i)=1,\qquad a'(\theta_i)=-1,
 \qquad a''(\theta_i)=0.
\]
Since $\bm{u}(\theta_i)=\bm{p}_i$ and $\bm{t}(\theta_i)=\bm{p}_{i+1}$,
substituting ${s}=1$ into~\eqref{eq:trap-gradient-hessian} gives
\[
 \nabla f_0(\bm{p}_i)=\bm{p}_i-\bm{p}_{i+1},
 \qquad \nabla^2 f_0(\bm{p}_i)=\bm{p}_{i+1}\bm{p}_{i+1}^{\mathsf T}.
\]
The gradient identity yields
$\mathcal T(\bm{p}_i)=\bm{p}_i-\nabla f_0(\bm{p}_i)=\bm{p}_{i+1}$, so the four points
form a cycle.
To verify contraction, note that $\bm{p}_i,\bm{p}_{i+1}$ form an
orthonormal basis of $\R^2$. Hence
\[
 D\mathcal T(\bm{p}_i)=\mathcal{I} -\nabla^2 f_0(\bm{p}_i)
        =\mathcal{I}-\bm{p}_{i+1}\bm{p}_{i+1}^{\mathsf T}
        =\bm{p}_i\bm{p}_i^{\mathsf T}=P_i.
\]
By the chain rule,
\[
 D(\mathcal{T}^2)(\bm{p}_i)
 =D\mathcal{T}(\bm{p}_{i+1})D\mathcal{T}(\bm{p}_i)
 =P_{i+1}P_i
 =\bm{p}_{i+1}(\bm{p}_{i+1}^{\mathsf T}\bm{p}_i)\bm{p}_i^{\mathsf T}
 =0.
\]
Since $\mathcal{T}^2(\bm{p}_0)=\bm{p}_2$, another application of the chain rule gives
\[
 D(\mathcal{T}^4)(\bm{p}_0)
 =D(\mathcal{T}^2)(\bm{p}_2)D(\mathcal{T}^2)(\bm{p}_0)=0.
\]
Thus the return map $\mathcal{T}^4$ satisfies the contraction condition
$\|D(\mathcal{T}^4)(\bm{p}_0)\|<1$.

\emph{(ii)}
We first show that $[-1,1]\times[-1,1]\subseteq\partial_C f_0(\bm{0})$.
For each $i=0,1,2,3$, the nonzero points $\tfrac{\bm{p}_i}{n}$ converge
to zero as $n\to\infty$. By homogeneity,
we have 
 $\nabla f_0(\tfrac{\bm{p}_i}{n})=\nabla f_0(\bm{p}_i)=\bm{p}_i-\bm{p}_{i+1}${.}
Thus each of the four vectors
\[
 \begin{aligned}
 \bm{p}_0-\bm{p}_1&=(1,-1), & \bm{p}_1-\bm{p}_2&=(1,1),\\
 \bm{p}_2-\bm{p}_3&=(-1,1), & \bm{p}_3-\bm{p}_0&=(-1,-1)
 \end{aligned}
\]
is a gradient limit at zero and hence belongs to
$\partial f_0(\bm{0})$.
Since $\partial_C f_0(\bm{0})$ is the convex hull of
$\partial f_0(\bm{0})$, it contains
\[
 \operatorname{conv}\{(1,-1),(1,1),(-1,1),(-1,-1)\}
 =[-1,1]\times[-1,1].
\]
In particular, $\bm{0}\in\operatorname{int}\partial_C f_0(\bm{0})$. 
We next show that $\widehat\partial f_0(\bm{0})=\varnothing$.
Let $\bm{d}:=\tfrac{(1,1)}{\sqrt{2}}$. Since
$a(\tfrac{\pi}{4})=a(\tfrac{5\pi}{4})=-3$, we have
$
 f_0(t\bm{d})=f_0(-t\bm{d})=-3t
$ for any $t>0$. 
Suppose that $\bm{g}\in\widehat\partial f_0(\bm{0})$. By definition,
\[
 f_0(\bm{x})\ge f_0(\bm{0})+\langle \bm{g},\bm{x}\rangle+o(\|\bm{x}\|)
 \qquad\text{as }\bm{x}\to\bm{0}.
\]
Applying this inequality to $\bm{x}=t\bm{d}$ and $\bm{x}=-t\bm{d}$ gives
$-3t\ge t\langle \bm{g},\bm{d}\rangle+o(t)$ and $ -3t\ge-t\langle \bm{g},\bm{d}\rangle+o(t)$.
Dividing by $t>0$ and letting $t\downarrow0$, we obtain $\langle \bm{g},\bm{d}\rangle\le-3$ and $-\langle \bm{g},\bm{d}\rangle\le-3$.
Adding these inequalities yields $0\le-6$, a contradiction.
Thus $\widehat\partial f_0(\bm{0})=\varnothing$.
We now prove that the gradients are uniformly bounded away
from zero. By~\eqref{eq:trap-gradient-hessian},
\[
 \|\nabla f_0(\bm{u}(\theta))\|^2
 =a(\theta)^2+a'(\theta)^2.
\]
It therefore suffices to show that $a(\theta)$ and
$a'(\theta)$ cannot both have absolute value less than $\tfrac12$.
Using $4\sin^4(2\theta)=(1-\cos(4\theta))^2$, we obtain
\[
 \begin{aligned}
 a(\theta)&=2\cos(4\theta)-\cos^2(4\theta)
             -\tfrac14\sin(4\theta),\\
 a'(\theta)&=-\cos(4\theta)
             -8(1-\cos(4\theta))\sin(4\theta).
 \end{aligned}
\]
If $\cos(4\theta)\ge\tfrac12$, then
$a(\theta)\ge\tfrac34-\tfrac14=\tfrac12$.
If $\cos(4\theta)\le-\tfrac12$, then
$a(\theta)\le-\tfrac54+\tfrac14=-1$.
Otherwise, $|\cos(4\theta)|<\tfrac12$, so
$|\sin(4\theta)|\ge\tfrac{\sqrt3}{2}$ and
$1-\cos(4\theta)\ge\tfrac12$. Hence, 
\[
 \begin{aligned}
 |a'(\theta)|
 &\ge8(1-\cos(4\theta))|\sin(4\theta)|
      -|\cos(4\theta)|\ge2\sqrt3-\frac12>\frac12.
 \end{aligned}
\]
These cases prove~\eqref{eq:trap-gradient-lower}.
Since the gradient is constant along each ray,
$\|\nabla f_0(\bm{x})\|\ge\tfrac12$ for $\bm{x}\ne\bm{0}$.

Finally, let $\bm{g}\in\partial f_0(\bm{0})$.
By definition, there exist $\bm{x}_n\to\bm{0}$ and $\bm{g}_n\to \bm{g}$ with
$\bm{g}_n\in\widehat\partial f_0(\bm{x}_n)$.
Since $\widehat\partial f_0(\bm{0})=\varnothing$, each $\bm{x}_n$ is
nonzero. Smoothness at $\bm{x}_n$ then gives
$\bm{g}_n=\nabla f_0(\bm{x}_n)$, and hence
\[
 \|\bm{g}\|=\lim_{n\to\infty}\|\nabla f_0(\bm{x}_n)\|
 \ge\frac12.
\]
Thus $\operatorname{dist}(\bm{0},\partial f_0(\bm{0}))
\ge \tfrac{1}{2}$, completing the proof.
\end{proof}

Since $D(\mathcal T^4)(\bm{p}_0)=0$, Proposition~\ref{prop:cycle-principle}
yields $\|\bm{x}_k\|=\Theta(\alpha_k)$ for $\tfrac{\bm{x}_0}{\alpha_0}$
sufficiently close to $\bm{p}_0$, provided that $\alpha_k\to0$,
the stepsize ratios are sufficiently close to one, and the
perturbations are sufficiently small.
Thus the iterates converge to the origin, which is Clarke
critical but not limiting critical; see Figure~\ref{fig:angular-construction}.

\subsection{Scaling and coercivity}
\label{sec:scaling-coercivity}

We now complete the construction.
Scaling makes the prescribed noise and tilts small enough
for Proposition~\ref{prop:cycle-principle} to apply.
A radial correction then ensures coercivity without
changing the iterates.

\begin{proof}[Proof of Theorem~\ref{thm:main}]
Fix $r>0$ and $\gamma\in(\tfrac12,1]$.
By Proposition~\ref{lem:trap-geometry}, the function $f_0$
satisfies the assumptions of
Proposition~\ref{prop:cycle-principle}.
Let $\delta,\varepsilon,c_0,C_0>0$ be the constants
provided by that proposition.

Choose
$
 M>\max\left\{4,\tfrac{r+1}{\varepsilon}\right\},
$ 
and define
\begin{equation}\label{eq:coercive-construction}
 f(\bm{x}):=Mf_0(\bm{x})+4M(\|\bm{x}\|-M)_+,
 \qquad V:=\mathbb B_1(\bm{0}),
\end{equation}
where $({t})_+:=\max\{{t},0\}$.
Choose $K\ge1$ sufficiently large that
$K^{-1}\le\varepsilon$ and $C_0K^{-\gamma}<1$, and set
\[
 \alpha_k:=(K+k)^{-\gamma},
 \qquad
 U:=M\alpha_0\mathbb B_\delta(\bm{p}_0).
\]
We will prove the theorem with
$c:=Mc_0$, $C:=MC_0$, and $\kappa:=\tfrac M2-1>0$.

The function $f$
is semialgebraic and globally $22M$-Lipschitz, since
$f_0$ is semialgebraic and $18$-Lipschitz, and
$\bm{x}\mapsto(\|\bm{x}\|-M)_+$ is semialgebraic and $1$-Lipschitz.
Moreover, $f_0(\bm{x})\ge-\tfrac{13}{4}\|\bm{x}\|$, so for
$\bm{v}\in V$ and $\|\bm{x}\|\ge M$,
\[
 f_{\bm{v}}(\bm{x})
 \ge -\frac{13M}{4}\|\bm{x}\|
      +4M(\|\bm{x}\|-M)-\|\bm{v}\|\|\bm{x}\|
 \ge \left(\frac{3M}{4}-1\right)\|\bm{x}\|-4M^2.
\]
Since $M>4$, every $f_{\bm{v}}$ is coercive.

Fix $\bm{v}\in V$, $\bm{x}_0\in U$, and a perturbation sequence
satisfying $\|\bm{\nu}_k\|\le r$. Consider the auxiliary iteration
\[
 \bm{y}_{k+1}
 =\bm{y}_k-\alpha_k\nabla f_0(\bm{y}_k)
      +\alpha_k\frac{\bm{v}+\bm{\nu}_k}{M},
 \qquad \bm{y}_0:=\frac{\bm{x}_0}{M}.
\]
The choices of $M$, $K$, and $U$ ensure that
\[
 \left\|\frac{\bm{v}+\bm{\nu}_k}{M}\right\|
 \le\frac{1+r}{M}<\varepsilon,
 \qquad
 1\le\frac{\alpha_k}{\alpha_{k+1}}
 =\left(1+\frac{1}{K+k}\right)^\gamma
 \le1+\frac1K\le1+\varepsilon,
\]
and $\tfrac{\bm{y}_0}{\alpha_0}\in\mathbb B_\delta(\bm{p}_0)$.
Proposition~\ref{prop:cycle-principle} therefore gives
$
 c_0\alpha_k\le\|\bm{y}_k\|\le C_0\alpha_k$. 
Set $\bm{x}_k:=M\bm{y}_k$. Then
\[
 0<Mc_0\alpha_k\le\|\bm{x}_k\|
 \le MC_0\alpha_k\le MC_0K^{-\gamma}<M.
\]
Hence $f=Mf_0$ in a neighborhood of every $\bm{x}_k$.
By homogeneity, $\nabla f_0(M\bm{y}_k)=\nabla f_0(\bm{y}_k)$,
so multiplying the auxiliary update by $M$ yields
\[
 \bm{x}_{k+1}
 =\bm{x}_k-\alpha_k\bigl(\nabla f(\bm{x}_k)-\bm{v}\bigr)
      +\alpha_k\bm{\nu}_k.
\]
Thus these are precisely the iterates of
\eqref{eq:basic-update}. The function $f$ is $C^\infty$ in a
neighborhood of every $\bm{x}_k$, and the required bounds hold
with $c=Mc_0$ and $C=MC_0$.

Finally, the gradient bound for $f_0$ gives
\[
 \|\nabla f_{\bm{v}}(\bm{x}_k)\|
 =\|M\nabla f_0(\bm{x}_k)-\bm{v}\|
 \ge\frac M2-\|\bm{v}\|
 \ge\frac M2-1=\kappa.
\]
Since $f=Mf_0$ near the origin, the subdifferential
calculus gives
\[
 \partial_C f_{\bm{v}}(\bm{0})=M\partial_C f_0(\bm{0})-\bm{v},
 \qquad
 \partial f_{\bm{v}}(\bm{0})
 =M\partial f_0(\bm{0})-\bm{v}.
\]
By Proposition~\ref{lem:trap-geometry},
$[-1,1]\times[-1,1]\subseteq\partial_C f_0(\bm{0})$.
Since $\|\bm{v}\|<1<M$, we have $\tfrac{\bm{v}}{M}\in[-1,1]\times[-1,1]$,
and hence $\bm{0}\in\partial_C f_{\bm{v}}(\bm{0})$.
On the other hand,
\[
 \operatorname{dist}\bigl(\bm{0},\partial f_{\bm{v}}(\bm{0})\bigr)
 \ge M\operatorname{dist}
       \bigl(\bm{0},\partial f_0(\bm{0})\bigr)-\|\bm{v}\|
 \ge\frac M2-1=\kappa>0.
\]
The necessary condition for local optimality therefore
implies that the origin is not a local minimizer of $f_{\bm{v}}$.
All constants and the nonempty open sets $U,V$ were chosen
independently of $\bm{v}$, $\bm{x}_0$, and the perturbation sequence.
This completes the proof.
\end{proof}

\section{Closing remarks}\label{sec:closing}
The example shows that semialgebraic geometry and bounded stochastic
perturbations do not suffice for generic convergence to local minimizers.
An attracting cycle in the normalized iteration can instead drive the
iterates to a point that is not even limiting critical.
The gradient steps and the distance to this point vanish at the same
rate, so the discrete mechanism persists arbitrarily close to the limit.

For the homogeneous local model, the role of regularity can
be seen directly. If $F$ is subdifferentially regular at zero
and $\bm{0}\in\partial_C F(\bm{0})$, then $\bm{0}\in\widehat\partial F(\bm{0})$.
Since $F(t\bm{u})=tF(\bm{u})$ and $F(\bm{0})=0$, the Fr\'echet inequality
gives $F(\bm{u})\ge0$ for every $\bm{u}$ after division by $t>0$
and passage to the limit $t\downarrow0$.
The origin would therefore be a global minimizer of $F$.
Regularity rules out the stationarity geometry used in our
construction.

It remains to determine which geometric conditions weaker than
subdifferential regularity suffice for generic convergence to
local minimizers.

\section*{Acknowledgments}
The author thanks Michael L. Overton for suggesting the problem discussed in this paper, for detailed discussions and for suggesting several improvements to the paper.  
The author developed the dynamical criterion in Proposition~\ref{prop:cycle-principle} and subsequently used GPT-6 Astra to help find the explicit counterexample based on this criterion. 
The author has independently verified all mathematical
arguments and takes full responsibility for the content
of the paper.

\bibliographystyle{abbrvnat}
\bibliography{ref}
\end{document}